\newtheorem{theorem}{Theorem}
\newtheorem{lemma}{Lemma}
\theoremstyle{definition}
\newtheorem{remark}{Remark}
\def\blfootnote{\gdef\@thefnmark{}\@footnotetext}
\renewcommand{\le}{\leqslant}
\renewcommand{\ge}{\geqslant}
\renewcommand{\leq}{\leqslant}
\renewcommand{\geq}{\geqslant}
\renewcommand{\emptyset}{\varnothing}
\newcommand{\real}{\mathbb{R}}
\newcommand{\natu}{\mathbb{N}}
\newcommand{\bsx}{\boldsymbol{x}}
\newcommand{\bsone}{\boldsymbol{1}}
\newcommand{\bsgamma}{\boldsymbol{\gamma}}
\newcommand{\bsnu}{\boldsymbol{\nu}}
\newcommand{\bsbeta}{\boldsymbol{\beta}}
\newcommand{\rd}{\,\mathrm{d}}
\newcommand{\e}{\mathbb{E}}
\begin{document}
\title{Sharp convergence bounds for sums of \\  POD and SPOD weights}
\date{}
\author{Zexin Pan\footnote{Institute of Fundamental and Transdisciplinary Research, Zhejiang University, 866 Yuhangtang Road, Xihu District, Hangzhou, Zhejiang Province, 310058, China. (\texttt{zep002@zju.edu.cn}).}}
\maketitle

\begin{abstract}
This work analyzes the convergence of sums of the form $S_{\boldsymbol{\gamma}}(m)=\sum_{v\subseteq \mathbb{N}}\gamma_v m^{|v|}$ with product and order dependent (POD) weights $\gamma_v$. We establish that for a nonnegative sequence $\{\Upsilon_j\mid j\in \mathbb{N}\}$,
$$\sum_{v\subseteq \mathbb{N}} |v|! m^{|v|}\prod_{j\in v} \Upsilon_j<\infty \text{ for all } m>0 \text{ if and only if } \sum_{j=1}^\infty \Upsilon_j<\infty.$$
We further characterize the growth of $S_{\boldsymbol{\gamma}}(m)$ when $\gamma_v=(|v|!)^{\sigma}\prod_{j\in v}j^{-\rho}$ and prove that $\log S_{\boldsymbol{\gamma}}(m)$ is of asymptotic order $m^{1/(\rho-\sigma)}$ when $\rho>\sigma\geq 0$.
We subsequently generalize both the convergence criterion and the asymptotic order of $\log S_{\boldsymbol{\gamma}}(m)$ to smoothness-driven product and order dependent (SPOD) weights, while noting that a full necessary-and-sufficient analogue remains open. Finally, we apply our theory to quasi-Monte Carlo (QMC) integration, showing that interlaced polynomial lattice rules achieve a dimension-independent convergence rate without a commonly imposed assumption in the QMC literature.
\end{abstract}

\section{Introduction}

This paper is motivated by tractability analysis in quasi-Monte Carlo (QMC) methods, where one encounters sums of the form
\begin{equation*}
 S_{\bsgamma}(m)=\sum_{v\subseteq \natu}\gamma_v m^{|v|} ,   
\end{equation*}
with $\natu=\{1,2,3,\dots\}$  and $\bsgamma=\{\gamma_v\mid v\subseteq \natu\}$ a collection of nonnegative real numbers called weights. Here, the summation over $v\subseteq \natu$ runs over all finite subsets of $\natu$.

A central problem, arising for instance in the analysis of lattice rules for integration in weighted Korobov spaces \cite{dick2022lattice}, is to establish under suitable conditions on $\bsgamma$ that
\begin{equation}\label{eqn:summable}
S_{\bsgamma}(m)<\infty \text{ for all } m> 0.    
\end{equation}
Proving \eqref{eqn:summable} is key to demonstrating that certain QMC rules achieve error bounds independent of the problem dimension—a property known as strong tractability in information-based complexity \cite{nova:wozn:2008}. 

Beyond mere convergence, some applications require precise control over the growth of $S_{\bsgamma}(m)$ as $m\to\infty$. In the analysis of base-$b$ digital nets \cite{dick:pill:2010}, for instance, $m$ presents the base-$b$ logarithm of the number of quadrature points $N=b^m$. The goal is often to show $S_{\bsgamma}(m)$ is sub-exponential in $m$:
\begin{equation}\label{eqn:subexp}
  \limsup_{m\to \infty} \frac{1}{m}\log S_{\bsgamma}(m)=0,  
\end{equation}
or equivalently, that for every $\delta>0$ there exists a constant $C_{\bsgamma,\delta}<\infty$ such that $S_{\bsgamma}(m)\leq C_{\bsgamma,\delta} b^{\delta m}$. 
Establishing \eqref{eqn:subexp} for specific weights, such as those in \cite[equation (13)]{c812a4a5-75ac-3614-936d-5782e44941d0}, can imply strong tractability of classical sequences like those of Niederreiter in weighted Sobolev spaces.

The convergence properties of $S_{\bsgamma}(m)$ are straightforward for product weights $\gamma_v=\prod_{j\in v}\gamma_j$.  In this case, the sum factors into an infinite product:
$$S_{\bsgamma}(m)=\prod_{j=1 }^\infty (1+\gamma_j m),$$
from which it follows that both \eqref{eqn:summable} and \eqref{eqn:subexp} are satisfied if $\sum_{j=1}^\infty \gamma_j<\infty$ \cite[Lemma 3]{HICKERNELL2003286}.

The analysis becomes more involved for product and order-dependent (POD) weights. These take the general form
\begin{equation}\label{eqn:PODweight}
 \gamma_v=\Gamma_{|v|}\prod_{j\in v}\Upsilon_j,    
\end{equation}
parameterized by two nonnegative sequences $\{\Gamma_\ell\mid \ell\in\natu\cup \{0\}\}$ and $\{\Upsilon_j \mid j\in\natu\}$. Here, the factor $\Gamma_{|v|}$ introduces a dependence on the order (the size $|v|$) of the subset $v$. POD weights arise naturally in the analysis of QMC methods for partial differential equations with random coefficients \cite{dick2016higher,graham:2015,kuo2017multilevel,kuo2012quasi,kuo2015multi}. See \cite{kuo:nuye:2016} for a comprehensive overview.

For the specific yet important class of POD weights where $\Gamma_{|v|}=(|v|!)^\sigma$ for $\sigma\geq 0$, a common starting point is the inequality
\begin{equation}\label{eqn:naivebound}
\sum_{v\subseteq \natu} |v|! m^{|v|}\prod_{j\in v} \Upsilon_j\leq \sum_{\ell=0}^\infty \Big(m\sum_{j=1}^\infty \Upsilon_j \Big)^{\ell}   
\end{equation}
established in \cite[Lemma 6.3]{kuo2012quasi}. This bound directly implies the finiteness of $S_{\bsgamma}(m)$ for the case $\sigma=1$, provided $m\sum_{j=1}^\infty \Upsilon_j <1$. Handling the general case $\sigma\neq 1$ then requires additional steps, such as an application of Jensen's or Hölder's inequality, as illustrated in the proof of \cite[Theorem 6.4]{kuo2012quasi}.

While useful, inequality \eqref{eqn:naivebound} yields a conservative estimate. It suggests the sum could diverge when $m$ exceeds $(\sum_{j=1}^\infty \Upsilon_j)^{-1}$, which severely overestimates its actual growth. In contrast, our Theorem~\ref{thm:improve} will demonstrate that the sum remains finite for all $m>0$ under the sole condition $\sum_{j=1}^\infty \Upsilon_j<\infty$.

The proof of this stronger convergence result is powered by the following general bounding theorem, which serves as our core analytical engine:
\begin{theorem}\label{mainthm}
Let $S_{\bsgamma}(m)=\sum_{v\subseteq \natu}\gamma_v m^{|v|}$ where $\gamma_v$ are of POD form \eqref{eqn:PODweight} with $\sum_{j=1}^\infty \Upsilon_j<\infty$. Then
$$S_{\bsgamma}(m)\leq \Gamma_0+\sum_{\ell=1}^\infty \exp(\ell+1) \Gamma_\ell m^\ell  \prod_{J=1}^\ell \max\left(\Upsilon_J,\frac{1}{J}\sum_{j=J+1}^\infty \Upsilon_j\right).$$
\end{theorem}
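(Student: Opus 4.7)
I would group $S_{\bsgamma}(m)$ by $|v|$ and write $S_{\bsgamma}(m) = \Gamma_0 + \sum_{\ell\geq 1}\Gamma_\ell m^\ell e_\ell$, where $e_\ell := \sum_{|v|=\ell}\prod_{j\in v}\Upsilon_j$ is the $\ell$-th elementary symmetric polynomial in $\Upsilon$. Because the $\Gamma_\ell$ are arbitrary nonnegative numbers, the theorem reduces to the per-$\ell$ bound
$$e_\ell \le e^{\ell+1}\prod_{J=1}^\ell M_J, \qquad M_J := \max\!\Bigl(\Upsilon_J,\,\tfrac{1}{J}\textstyle\sum_{j>J}\Upsilon_j\Bigr) = \max(\Upsilon_J, T_J),$$
with $T_J$ denoting the tail average. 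All quantities are finite under $\sum_j \Upsilon_j < \infty$, so there are no convergence issues.

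The strategy is to peel indices off the sequence one at a time from the left, carefully tracking the leftover mass. Let $\Upsilon^{(s)} := (\Upsilon_{s+1}, \Upsilon_{s+2}, \dots)$. Splitting on whether the smallest element of $v$ equals $s+1$ gives the exact identity
$$e_r(\Upsilon^{(s)}) = \Upsilon_{s+1}\, e_{r-1}(\Upsilon^{(s+1)}) + e_r(\Upsilon^{(s+1)}).$$
The key auxiliary lemma is an averaging inequality valid for every nonnegative summable sequence $\alpha$:
$$e_r(\alpha) \le \tfrac{1}{r}\bigl(\textstyle\sum_j \alpha_j\bigr) e_{r-1}(\alpha).$$
I would prove this by expanding $\bigl(\sum_j \alpha_j\bigr)\, e_{r-1}(\alpha)$ and noting that each $r$-subset of indices is generated exactly $r$ times, with a nonnegative remainder from repeated indices that we discard (Fubini is justified since all summands are nonnegative). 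Applying this lemma to the tail $\Upsilon^{(s+1)}$, whose total mass is $(s+1)T_{s+1}$, yields
$$e_r(\Upsilon^{(s)}) \le \Bigl(\Upsilon_{s+1} + \tfrac{(s+1)T_{s+1}}{r}\Bigr)\, e_{r-1}(\Upsilon^{(s+1)}).$$

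Iterating from $(r,s)=(\ell,0)$ down to $(1,\ell-1)$, and using $e_0 = 1$, telescopes to
$$e_\ell \le \prod_{K=1}^\ell \Bigl(\Upsilon_K + \tfrac{K\,T_K}{\ell-K+1}\Bigr).$$
Since $\Upsilon_K \le M_K$ and $T_K \le M_K$, each factor is at most $M_K\cdot\tfrac{\ell+1}{\ell-K+1}$, so the whole product is bounded by $\bigl(\prod_K M_K\bigr)\cdot (\ell+1)^\ell/\ell!$. Finally $(\ell+1)^\ell/\ell! \le e^{\ell+1}$ via $\ell! \ge (\ell/e)^\ell$ together with $(1+1/\ell)^\ell \le e$, which closes the required inequality.

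The main obstacle is identifying the right recursion. A more obvious route is to split $v$ by the length $J^*$ of its longest initial segment $\{1,\dots,J^*\}\subseteq v$ and then bound the tail by $\tfrac{1}{(\ell-J^*)!}((J^*+1)T_{J^*+1})^{\ell-J^*}$; this, however, leaves one to compare $T_{J^*+1}^{\ell-J^*}$ with $\prod_{K=J^*+1}^\ell M_K$, which is false in general because $T_K$ is decreasing while $M_K$ is not. The peel-one-at-a-time approach circumvents this obstruction precisely because the averaging inequality injects the weight $K/(\ell-K+1)$ at position $K$, matching each $T_K$ to the correct factor $M_K$ in the product.
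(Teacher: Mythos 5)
Your proof is correct, and it reaches the paper's key intermediate inequality by a genuinely different route. The paper proves the bound $\sum_{|v|=\ell}\prod_{j\in v}\Upsilon_j\leq \prod_{J=1}^\ell\bigl(\Upsilon_J+\frac{\zeta_{J+1}}{\ell-J+1}\bigr)$ (with $\zeta_{J+1}=\sum_{j>J}\Upsilon_j$, i.e.\ exactly your $\prod_K(\Upsilon_K+\frac{K T_K}{\ell-K+1})$) probabilistically: it normalizes $\Upsilon$ to a probability mass function, interprets $\ell!\,e_\ell/\zeta_1^\ell$ as the probability that $\ell$ i.i.d.\ draws are all distinct, and bounds that probability by conditioning successively on the occupancy counts of coordinates $1,\dots,J$ and using the binomial tail $\Pr(I_J\leq 1)$. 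You instead stay purely algebraic: the standard recursion $e_r(\Upsilon^{(s)})=\Upsilon_{s+1}e_{r-1}(\Upsilon^{(s+1)})+e_r(\Upsilon^{(s+1)})$ combined with the Maclaurin-type averaging inequality $r\,e_r(\alpha)\leq(\sum_j\alpha_j)\,e_{r-1}(\alpha)$ (whose double-counting proof is standard and which you justify correctly) telescopes to the same product. From that point on the two arguments coincide verbatim: each factor is at most $M_K\cdot\frac{\ell+1}{\ell-K+1}$, the product of the ratios is $(\ell+1)^{\ell}/\ell!=(\ell+1)^{\ell+1}/(\ell+1)!\leq e^{\ell+1}$ by Stirling, and the reduction of the theorem to the per-$\ell$ bound is the same trivial grouping by $|v|$. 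Your version is more elementary (no probability space, no conditioning, no edge case when $\zeta_1=0$) and arguably shorter; the paper's version buys a probabilistic interpretation (the bound measures how unlikely collisions are among weighted draws) that some readers may find more illuminating, but nothing in the rest of the paper depends on that interpretation. Your closing remark about why the ``longest initial segment'' decomposition fails is accurate and not needed for the proof.
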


The remainder of this paper is organized as follows. The proof of Theorem~\ref{mainthm} is presented in Section~\ref{sec:proof}. Section~\ref{sec:corollary} explores its important corollaries, including precise asymptotic rates for a common class of weights $\gamma_v=(|v|!)^{\sigma}\prod_{j\in v}j^{-\rho}$. Section~\ref{sec:SPOD} extends our findings to the broader class of smoothness-driven product and order dependent (SPOD) weights. Section~\ref{sec:QMC} applies our theoretical results to QMC integration, where we show that a common assumption in the QMC literature can be removed while retaining the same convergence rate.

\section{Proof of Theorem~\ref{mainthm}}\label{sec:proof}

Our proof requires the following technical lemma:

\begin{lemma}\label{lem:factorial}
Let $\natu_{\ell}=\{v\subseteq \natu\mid |v|=\ell\}$ for $\ell\in\natu$. Given nonnegative $\{\Upsilon_j\mid j\in \natu\}$ satisfying $\sum_{j=1}^\infty \Upsilon_j<\infty$, we denote
$\zeta_{J}=\sum_{j=J}^\infty \Upsilon_j$.
Then
    $$\sum_{v\in \natu_\ell} \prod_{j\in v} \Upsilon_j\leq \prod_{J=1}^\ell \left(\Upsilon_J+\frac{\zeta_{J+1}}{\ell-J+1}\right)\leq \exp(\ell+1)\prod_{J=1}^\ell \max(\Upsilon_J,J^{-1}\zeta_{J+1}). $$
\end{lemma}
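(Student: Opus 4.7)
The plan is to prove the two displayed inequalities separately; the first is essentially tight and requires a delicate induction, while the second is a direct per-factor estimate.

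For the first inequality I would introduce the shifted quantities
\[
F_\ell^{(k)} := \sum_{\substack{v \in \natu_\ell\\ \min v \geq k}} \prod_{j \in v} \Upsilon_j, \qquad
G_\ell^{(k)} := \prod_{J=1}^\ell \left(\Upsilon_{k+J-1} + \frac{\zeta_{k+J}}{\ell-J+1}\right),
\]
and prove $F_\ell^{(k)} \leq G_\ell^{(k)}$ for all $k, \ell \geq 1$. The two structural identities I would exploit are the recursion $F_\ell^{(k)} = \Upsilon_k F_{\ell-1}^{(k+1)} + F_\ell^{(k+1)}$ (splitting on whether $k \in v$) and the factorization $G_\ell^{(k)} = (\Upsilon_k + \zeta_{k+1}/\ell)\,G_{\ell-1}^{(k+1)}$ (peeling the first factor off the product). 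Running induction on $\ell$ together with a downward induction on $k$ for each fixed $\ell$ (anchored by the observation that both sides vanish for $k > N-\ell+1$ when $\Upsilon$ has support in $\{1,\dots,N\}$, with the general case following by monotone limits), the inductive step reduces precisely to the auxiliary inequality
\[
G_\ell^{(k+1)} \leq \frac{\zeta_{k+1}}{\ell}\,G_{\ell-1}^{(k+1)}.
\]

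This auxiliary inequality is the main obstacle. To attack it I would substitute $u_J = \Upsilon_{k+J}$, $w_J = \zeta_{k+J+1}$, and $y_J = u_J + w_J = \zeta_{k+J}$, peel off the $J = \ell$ factor $u_\ell + w_\ell = y_\ell$ from the left product, and after a short algebraic rearrangement (using the telescope $\prod_{J=1}^{\ell-1}(\ell-J)/(\ell-J+1) = 1/\ell$) recast the bound as
\[
y_\ell \prod_{J=1}^{\ell-1}\left(1 + \frac{u_J}{T_J}\right) \leq y_1, \qquad T_J := (\ell-J)u_J + w_J.
\]
Since $T_J \geq w_J = y_{J+1}$, the per-factor estimate $1 + u_J/T_J \leq 1 + u_J/y_{J+1} = y_J/y_{J+1}$ holds; multiplying these telescopes to $y_1/y_\ell$, which when combined with the prefactor $y_\ell$ gives exactly $y_1$.

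For the second inequality I would use the pointwise estimate
\[
\Upsilon_J + \frac{\zeta_{J+1}}{\ell-J+1} \leq \frac{\ell+1}{\ell-J+1}\,\max\left(\Upsilon_J,\,\frac{\zeta_{J+1}}{J}\right),
\]
obtained by writing $\zeta_{J+1}/(\ell-J+1) = (J/(\ell-J+1))\cdot(\zeta_{J+1}/J)$ and bounding both summands by the same maximum. Taking the product over $J$ produces the constant prefactor $\prod_{J=1}^\ell(\ell+1)/(\ell-J+1) = (\ell+1)^\ell/\ell!$, which is at most $\exp(\ell+1)$ by the Stirling lower bound $\ell! \geq (\ell/e)^\ell\sqrt{2\pi\ell}$ combined with $(1+1/\ell)^\ell \leq e$.
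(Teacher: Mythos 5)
Your proof is correct, but it takes a genuinely different route from the paper's. The paper proves the first inequality probabilistically: after normalizing, $\ell!\,\zeta_1^{-\ell}\sum_{v\in\natu_\ell}\prod_{j\in v}\Upsilon_j$ is the probability that $\ell$ i.i.d.\ draws from the distribution $\mathbb{P}(\{j\})=\Upsilon_j/\zeta_1$ are pairwise distinct; conditioning on the occupancy counts $I_1,\dots,I_{J-1}$ makes $I_J$ binomial with at least $\ell-J+1$ trials and success probability $\Upsilon_J/\zeta_J$, and multiplying the resulting one-step bounds produces exactly your middle product $\prod_{J=1}^\ell\bigl(\Upsilon_J+\zeta_{J+1}/(\ell-J+1)\bigr)$. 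You reach the same product by a purely combinatorial double induction --- on $\ell$, and downward on $k$ via the split $F_\ell^{(k)}=\Upsilon_k F_{\ell-1}^{(k+1)}+F_\ell^{(k+1)}$ --- with the entire burden concentrated in the telescoping auxiliary inequality $G_\ell^{(k+1)}\le(\zeta_{k+1}/\ell)\,G_{\ell-1}^{(k+1)}$, which your estimate $1+u_J/T_J\le y_J/y_{J+1}$ settles cleanly; the only loose end is the degenerate case $T_J=0$ (equivalently $y_{J+1}=0$), where $G_\ell^{(k+1)}=0$ and the claim is trivial, and which you should flag to justify dividing by $T_J$ and $y_{J+1}$. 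Your approach is more elementary and self-contained (no probabilistic interpretation or conditioning), it makes the $\ell=1$ case an identity, and it shows precisely where slack enters (replacing $T_J=(\ell-J)u_J+w_J$ by $w_J$); the cost is the extra bookkeeping of the finite-support anchor and the monotone limit, which the probabilistic argument avoids. The second inequality is handled essentially identically in both proofs: the same per-factor estimate and the same constant $(\ell+1)^{\ell+1}/(\ell+1)!=(\ell+1)^{\ell}/\ell!\le \exp(\ell+1)$ via Stirling.
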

\begin{proof}
Without loss of generality, we may assume that $\Upsilon_j>0$ for every $j\in\natu$. Indeed, if $\Upsilon_j=0$ for some $j$, any subset $v$ containing such a $j$ has $\prod_{j\in v}\Upsilon_j=0$. Hence, we may restrict the summation over $\natu_\ell$ to subsets consisting only of indices with positive $\Upsilon_j$.

    Let $\mathbb{P}$ be the probability distribution on $\natu$ defined by the probability mass function $\mathbb{P}({j})  =\Upsilon_j/\zeta_1$ for $j\in \natu$. Consider $X_1,X_2,\dots,X_\ell\overset{\text{i.i.d.}}{\sim} \mathbb{P}$. The probability that $X_1,X_2,\dots,X_\ell$ are all distinct is given by
    \begin{equation}\label{eqn:prob}
     \sum_{v\in \natu_\ell} \ell!\prod_{j\in v}\frac{\Upsilon_j}{\zeta_1}=  \frac{\ell!}{\zeta_1^\ell}\sum_{v\in \natu_\ell} \prod_{j\in v}\Upsilon_j.   
    \end{equation}
    Here, the factor $\ell!$ accounts for all possible orderings of the $\ell$ distinct indices that form the set $v$.

    Next, we let $I_j=\sum_{i=1}^\ell \bsone\{X_i=j\}$, where $\bsone$ denotes the indicator function, and $\mathcal{I}_j$ be the $\sigma-$algebra generated by $I_1,\dots,I_j$. The event $X_1,X_2,\dots,X_\ell$ are all distinct is equivalent to $I_j\leq 1$ for all $j\in\natu$. Denoting $A_j=\bsone\{I_j\leq 1\}$, for each $J\in \natu$ with $J\geq 2$ we have
    \begin{align*}
    \e\Big(\prod_{j=1}^{J} A_j\Big\vert \mathcal{I}_{J-1}\Big)=\Big(\prod_{j=1}^{J-1} A_j\Big) \e\Big( A_{J}\Big\vert \mathcal{I}_{J-1}\Big)   
    =\Big(\prod_{j=1}^{J-1} A_j\Big)\Pr\Big(I_{J}\leq 1 \Big\vert \mathcal{I}_{J-1}\Big).
    \end{align*}
    Conditioned on $\mathcal{I}_{J-1}$, $I_{J}$ follows a binomial distribution with a total number of trials $\ell-\sum_{j=1}^{J-1}I_j$ and success probability $\Upsilon_J/\zeta_J$. When $2\leq J\leq \ell$, 
    because $\sum_{j=1}^{J-1}I_j\leq J-1$ given $I_j\leq 1$ for all $1\leq j\leq J-1$, 
    \begin{align*}
   &\Big(\prod_{j=1}^{J-1} A_j\Big)\Pr\Big(I_{J}\leq 1 \Big\vert \mathcal{I}_{J-1}\Big)\\
   \leq &\Big(\prod_{j=1}^{J-1} A_j\Big)\left(\Big(1-\frac{\Upsilon_J}{\zeta_J}\Big)^{\ell-J+1}+(\ell-J+1)\frac{\Upsilon_J}{\zeta_J}\Big(1-\frac{\Upsilon_J}{\zeta_J}\Big)^{\ell-J}\right)  \\
   =& \Big(\prod_{j=1}^{J-1} A_j\Big)\Big(1+(\ell-J)\frac{\Upsilon_J}{\zeta_J}\Big)\Big(1-\frac{\Upsilon_J}{\zeta_J}\Big)^{\ell-J}. 
    \end{align*}
    Taking expectation on both sides, we obtain
    \begin{align*}
        \e\Big(\prod_{j=1}^{J} A_j\Big)
       \leq\e\Big(\prod_{j=1}^{J-1} A_j\Big)\Big(1+(\ell-J)\frac{\Upsilon_J}{\zeta_J}\Big)\Big(1-\frac{\Upsilon_J}{\zeta_J}\Big)^{\ell-J}.
    \end{align*}
    By induction and the fact that $\prod_{j=1}^{\infty} A_j=1$ only if $\prod_{j=1}^{\ell} A_j=1$, 
    \begin{align*}
     \e\Big(\prod_{j=1}^{\infty} A_j\Big)
     \leq \e\Big(\prod_{j=1}^{\ell} A_j\Big)\leq &\e(A_1) \prod_{J=2}^\ell \Big(1+(\ell-J)\frac{\Upsilon_J}{\zeta_J}\Big)\Big(1-\frac{\Upsilon_J}{\zeta_J}\Big)^{\ell-J} \\
     =& \prod_{J=1}^\ell \Big(1+(\ell-J)\frac{\Upsilon_J}{\zeta_J}\Big)\Big(1-\frac{\Upsilon_J}{\zeta_J}\Big)^{\ell-J},
    \end{align*}
    where the last equality follows because $\e(A_1)=\Pr(I_1\leq1)$ and $I_1$ follows a binomial distribution with a total number of trials $\ell$ and success probability $\Upsilon_1/\zeta_1$. Equating $\e(\prod_{j=1}^{\infty} A_j)$ with \eqref{eqn:prob} yields the upper bound
    \begin{equation*}
     \sum_{v\in \natu_\ell} \prod_{j\in v}\Upsilon_j\leq \frac{\zeta_1^\ell}{\ell!}\prod_{J=1}^\ell \Big(1+(\ell-J)\frac{\Upsilon_J}{\zeta_J}\Big)\Big(1-\frac{\Upsilon_J}{\zeta_J}\Big)^{\ell-J}.   
    \end{equation*}
Furthermore, because $\zeta_J=\Upsilon_J+\zeta_{J+1}$,
\begin{align*}
    \zeta_1^\ell\prod_{J=1}^\ell\Big(1-\frac{\Upsilon_J}{\zeta_J}\Big)^{\ell-J}=\zeta_1^\ell\prod_{J=1}^\ell\Big(\frac{\zeta_{J+1}}{\zeta_{J}}\Big)^{\ell-J}=\prod_{J=1}^\ell \zeta_J.
\end{align*}
Therefore,
\begin{align*}
    \sum_{v\in \natu_\ell} \prod_{j\in v}\Upsilon_j \leq & \frac{1}{\ell!}\prod_{J=1}^\ell \Big(\zeta_J+(\ell-J)\Upsilon_J\Big)\\
    =&\prod_{J=1}^\ell \left(\frac{\zeta_{J+1}}{\ell-J+1}+\Upsilon_J\right)\\
    \leq & \prod_{J=1}^\ell \left(\max(J^{-1}\zeta_{J+1},\Upsilon_J) \left(\frac{J}{\ell-J+1}+1\right)\right).
\end{align*}
Finally, we use the Stirling's formula \cite{robbins1955remark} to conclude
\begin{align*}
        \prod_{J=1}^\ell \left(\frac{J}{\ell-J+1}+1\right)
        =&\prod_{J=1}^\ell \frac{\ell+1}{\ell-J+1}=\frac{(\ell+1)^{\ell+1}}{(\ell+1)!}\leq \exp(\ell+1).\qedhere
    \end{align*}
\end{proof}

\begin{proof}[Proof of Theorem~\ref{mainthm}]
    Notice that
    $$S_{\bsgamma}(m)=\Gamma_0+\sum_{\ell=1}^\infty\sum_{v\in \natu_\ell }\gamma_v m^\ell =\Gamma_0+\sum_{\ell=1}^\infty \Gamma_\ell m^{\ell}\sum_{v\in \natu_\ell }\prod_{j\in v} \Upsilon_j. $$
    Our conclusion follows directly from Lemma~\ref{lem:factorial}.
\end{proof}

\section{Tractability analysis of POD weights}\label{sec:corollary}

In this section, we apply Theorem~\ref{mainthm} to derive improved bounds for several common settings in QMC tractability analysis. We first present the following result, which sharpens the key estimate in \cite[Lemma 6.3]{kuo2012quasi}.

\begin{theorem}\label{thm:improve}
For nonnegative $\{\Upsilon_j\mid j\in \natu\}$, 
\begin{equation}\label{eqn:improve}
   \sum_{v\subseteq \natu} |v|!  m^{|v|}\prod_{j\in v} \Upsilon_j<\infty \text{ for all } m>0 
\end{equation}
    if and only if $\sum_{j=1}^\infty \Upsilon_j<\infty$.
\end{theorem}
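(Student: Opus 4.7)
The plan is to establish the two directions separately. The forward implication is immediate: restricting the outer summation in \eqref{eqn:improve} to singletons $v=\{j\}$ gives the lower bound $m\sum_{j=1}^\infty \Upsilon_j$, so if the sum is finite for any $m>0$, then $\sum_{j=1}^\infty \Upsilon_j<\infty$.

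For the reverse direction, I would apply Theorem~\ref{mainthm} with $\Gamma_\ell=\ell!$ to obtain
$$S_{\bsgamma}(m)\leq 1+\sum_{\ell=1}^\infty \exp(\ell+1)\,\ell!\,m^{\ell}\prod_{J=1}^{\ell} a_J,\qquad a_J=\max\Big(\Upsilon_J,\tfrac{1}{J}\sum_{j=J+1}^\infty \Upsilon_j\Big),$$
and invoke the ratio test. The ratio of successive terms equals $e\,m\,(\ell+1)\,a_{\ell+1}$, so convergence for every $m>0$ follows as soon as $J\,a_J\to 0$. Since $J\,a_J=\max\bigl(J\,\Upsilon_J,\,\sum_{j=J+1}^\infty \Upsilon_j\bigr)$ and the tail of a convergent series already tends to zero, the goal reduces to showing $J\,\Upsilon_J\to 0$.

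The main obstacle, and the central trick of the proof, is that $J\,\Upsilon_J\to 0$ is \emph{not} implied by summability of $\{\Upsilon_j\}$ alone (for instance, set $\Upsilon_{2^k}=2^{-k}$ and $\Upsilon_j=0$ otherwise). However, the left-hand side of \eqref{eqn:improve} depends only on the multiset $\{\Upsilon_j\}$ and is invariant under permutations of $\natu$, so I am free to relabel the indices before applying Theorem~\ref{mainthm}. Choosing the arrangement in which $\Upsilon_j$ is non-increasing, the classical estimate $J\,\Upsilon_{2J}\leq \sum_{j=J+1}^{2J}\Upsilon_j$ (together with its odd-index analogue) yields $J\,\Upsilon_J\to 0$, and the ratio test then closes the argument.
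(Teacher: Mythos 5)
Your proof is correct, and while it shares the paper's skeleton (forward direction by a lower bound, reverse direction by applying Theorem~\ref{mainthm} with $\Gamma_\ell=\ell!$ and then testing the resulting series), the decisive convergence step is genuinely different. The paper uses the root test: by $\max(a,b)\le a+b$ and the AM--GM inequality,
$\bigl(\prod_{J=1}^\ell \max(J\Upsilon_J,\sum_{j>J}\Upsilon_j)\bigr)^{1/\ell}\le \frac1\ell\sum_{J=1}^\ell J\Upsilon_J+\frac1\ell\sum_{J=1}^\ell\sum_{j>J}\Upsilon_j$,
which tends to $0$ because tails of a convergent series are Ces\`aro-null and because $\frac1\ell\sum_{J\le\ell}J\Upsilon_J\to0$ for any summable nonnegative sequence (Kronecker's lemma); no monotonicity is needed, so no rearrangement is required. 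Your ratio test instead demands the pointwise decay $J\Upsilon_J\to0$, and you correctly identify that this can fail for summable sequences and repair it by sorting, exploiting the permutation invariance of the sum together with the classical fact that $J\Upsilon_J\to0$ for non-increasing summable sequences. Both routes are sound; the paper's avoids rearrangement at the cost of the slightly less elementary Ces\`aro estimate, while yours trades that for one small technicality you should address: if infinitely many $\Upsilon_j$ vanish while infinitely many are positive, the non-increasing rearrangement of the values is not literally induced by a permutation of $\natu$, so you should first discard the indices with $\Upsilon_j=0$ (subsets containing them contribute nothing) and then sort the remaining support. Your forward direction, restricting to singletons, is also simpler than the paper's, which lower-bounds the sum by $\prod_{j}(1+m\Upsilon_j)$ and invokes divergence of infinite products.
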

\begin{proof}
First, we assume $\sum_{j=1}^\infty \Upsilon_j<\infty$.
    A direct application of Theorem~\ref{mainthm} with $\Gamma_\ell=\ell!$ yields
    \begin{equation}\label{eqn:improvebound}
    \sum_{v\subseteq \natu} |v|!  m^{|v|}\prod_{j\in v} \Upsilon_j\leq  \Gamma_0+\sum_{\ell=1}^\infty \exp(\ell+1) m^\ell \prod_{J=1}^\ell \max\left(J\Upsilon_J,\sum_{j=J+1}^\infty \Upsilon_j\right).    
    \end{equation}
By the inequality of arithmetic and geometric means, 
$$\left(\prod_{J=1}^\ell \max\left(J\Upsilon_J,\sum_{j=J+1}^\infty \Upsilon_j\right)\right)^{1/\ell}\leq \frac{1}{\ell}\sum_{J=1}^\ell J\Upsilon_J+\frac{1}{\ell}\sum_{J=1}^\ell\sum_{j=J+1}^\infty \Upsilon_j.$$
       Since  $\sum_{j=1}^\infty \Upsilon_j<\infty$, for every $\varepsilon>0$ we can find a sufficiently large $J^*$ such that $\sum_{j=J^*+1}^\infty \Upsilon_j<\varepsilon$. Then for every $\ell> J^*$,
       \begin{align*}
 \frac{1}{\ell}\sum_{J=1}^\ell J\Upsilon_J+\frac{1}{\ell}\sum_{J=1}^\ell\sum_{j=J+1}^\infty \Upsilon_j\leq  2\varepsilon+  \frac{1}{\ell}\sum_{J=1}^{J^*}J\Upsilon_J+\frac{1}{\ell}\sum_{J=1}^{J^*}\sum_{j=J+1}^\infty \Upsilon_j.
       \end{align*}
       Taking the limit $\ell\to\infty$ yields
       $$\limsup_{\ell\to\infty}\left(\prod_{J=1}^\ell \max\left(J\Upsilon_J,\sum_{j=J+1}^\infty \Upsilon_j\right)\right)^{1/\ell}\leq 2\varepsilon.$$
   Since $\varepsilon>0$ can be arbitrarily small, the left hand side in fact equals to $0$. Consequently, \eqref{eqn:improve} follows from \eqref{eqn:improvebound} and the root test.

   Next, we assume $\sum_{j=1}^\infty \Upsilon_j=\infty$. We have the lower bound
   $$\sum_{v\subseteq \natu} |v|! m^{|v|}\prod_{j\in v} \Upsilon_j\geq \sum_{v\subseteq \natu}m^{|v|}\prod_{j\in v} \Upsilon_j=\prod_{j=1}^\infty (1+m\Upsilon_j),$$
   which diverges given $\sum_{j=1}^\infty \Upsilon_j=\infty$ \cite[Proposition 5.4]{conway1978functions}.
\end{proof}

\begin{remark}
    An alternative way to prove the sufficiency of $\sum_{j=1}^\infty \Upsilon_j<\infty$ is via the Gamma integral
    $$|v|!=\int_{0}^\infty t^{|v|} \exp(-t)\rd t,$$
    which allows us to rewrite the sum as
   \begin{align*}
       \sum_{v\subseteq \natu} |v|!  m^{|v|}\prod_{j\in v} \Upsilon_j=&\int_{0}^\infty \sum_{v\subseteq \natu}t^{|v|} \exp(-t)m^{|v|}\prod_{j\in v} \Upsilon_j\rd t\\
       =&\int_{0}^\infty \exp(-t) \prod_{j=1}^{\infty}\left(1+tm\Upsilon_j\right)\rd t.
   \end{align*}
   The interchange of integration and summation is valid since all integrands are nonnegative. The proof is then completed by applying \cite[Lemma 3]{HICKERNELL2003286}, which states that the infinite product $\prod_{j=1}^{\infty}\left(1+tm\Upsilon_j\right)$ is sub-exponential in $t$ when $\sum_{j=1}^\infty \Upsilon_j<\infty$.
\end{remark}

Next, we study the case $\Gamma_{|v|}$ is a power of $|v|!$ and $\Upsilon_j$ is proportional $j^{-\rho}$ for $\rho>1$. We need the following lemma:

\begin{lemma}\label{lem:growth}
    For $\theta>0$,
    \begin{equation*}
\lim_{m\to\infty}m^{-1/\theta}\log\left(\sum_{\ell=0}^\infty\frac{m^\ell}{(\ell!)^{\theta}}  \right)=\theta.
    \end{equation*}
\end{lemma}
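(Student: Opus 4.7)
The plan is a two-sided estimate driven by Stirling's formula. Viewed as a function of a continuous index, $\ell\mapsto m^\ell/(\ell!)^\theta$ is peaked near $\ell^\ast=m^{1/\theta}$ with peak value of order $e^{\theta m^{1/\theta}}$, and the target limit $\theta$ emerges from this peak. I will establish $\liminf\geq\theta$ and $\limsup\leq\theta$ separately.

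For the lower bound I retain only the single term at $\ell_0=\lfloor m^{1/\theta}\rfloor$. Stirling's formula yields $\log(\ell_0!)=\ell_0\log\ell_0-\ell_0+O(\log m)$, so $\log\bigl(m^{\ell_0}/(\ell_0!)^\theta\bigr)=\theta m^{1/\theta}+O(\log m)$ and
\begin{equation*}
\liminf_{m\to\infty}m^{-1/\theta}\log\sum_{\ell=0}^\infty\frac{m^\ell}{(\ell!)^\theta}\geq\theta.
\end{equation*}

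For the upper bound I apply the Stirling lower bound $\ell!\geq\sqrt{2\pi\ell}\,(\ell/e)^\ell$ for $\ell\geq 1$ to obtain
\begin{equation*}
\frac{m^\ell}{(\ell!)^\theta}\leq(2\pi\ell)^{-\theta/2}\Bigl(\frac{me^\theta}{\ell^\theta}\Bigr)^\ell.
\end{equation*}
A direct calculus check shows the majorant $g(\ell)=(me^\theta/\ell^\theta)^\ell$ attains its global maximum $e^{\theta m^{1/\theta}}$ at $\ell=m^{1/\theta}$. I then split the series at the threshold $L=\lceil 2e\,m^{1/\theta}\rceil$. The head $0\leq\ell\leq L$ contributes at most $1+L\cdot e^{\theta m^{1/\theta}}$. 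For the tail $\ell>L$, the bound $\ell^\theta\geq(2e)^\theta m$ forces $me^\theta/\ell^\theta\leq 2^{-\theta}$, so each term is dominated by $2^{-\theta\ell}$ and the tail sum is bounded by a constant independent of $m$. Combining, $\sum_{\ell=0}^\infty m^\ell/(\ell!)^\theta\leq O(m^{1/\theta})\,e^{\theta m^{1/\theta}}$, which after taking logarithms and dividing by $m^{1/\theta}$ yields $\limsup_{m\to\infty}m^{-1/\theta}\log\sum_\ell m^\ell/(\ell!)^\theta\leq\theta$.

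The main subtlety is calibrating the splitting point $L$: it must exceed $e\,m^{1/\theta}$ by a fixed constant factor (so that geometric decay in the tail is uniform in $m$), yet remain only polynomial in $m^{1/\theta}$ (so that the head count contributes only a logarithmic correction after the outer logarithm). Everything else is a routine application of Stirling. Once this balance is struck, the matching $\liminf$ and $\limsup$ give the claimed limit.
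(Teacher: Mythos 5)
Your proof is correct and follows essentially the same strategy as the paper's: isolate the peak term near $\ell\approx m^{1/\theta}$ for the lower bound, and for the upper bound split the series at a constant multiple of $m^{1/\theta}$, bounding the head by the number of terms times the peak value and the tail by a geometric series, with Stirling supplying the asymptotic $\theta m^{1/\theta}$. The only cosmetic difference is that the paper locates the maximal term via the discrete ratio test and applies Stirling once at the end, whereas you apply Stirling term-by-term to get a continuous majorant and maximize that; both give the same two-sided estimate.
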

\begin{proof}
    Let $\ell_*=\lfloor m^{1/\theta}\rfloor$. Because 
    $$\frac{\left((\ell+1)!\right)^{-\theta}m^{\ell+1}}{(\ell!)^{-\theta}m^\ell}=(\ell+1)^{-\theta}m < 1$$
    if and only if $\ell\geq \ell_*$, we see that $(\ell!)^{-\theta}m^\ell$ is maximized at $\ell=\ell_*$. Therefore,
    $$\frac{m^{\ell_*}}{(\ell_*!)^{\theta}}\leq \sum_{\ell=0}^\infty\frac{m^\ell}{(\ell!)^{\theta}}  \leq 1+2\ell_*\frac{m^{\ell_*}}{(\ell_*!)^{\theta}}+ \sum_{\ell=2\ell_*+1}^\infty \frac{m^\ell}{(\ell!)^{\theta}}  \leq 1+\left( 2\ell_*+\sum_{\ell=0}^\infty 2^{-\theta\ell}\right)\frac{m^{\ell_*}}{(\ell_*!)^{\theta}},$$
    where we have used $(\ell+1)^{-\theta}m<2^{-\theta}$ when $\ell\ge 2\ell_*+1$. By the Stirling's formula, $\log((\ell_*!)^{-\theta}m^{\ell_*})\sim \theta m^{1/\theta}$. Our conclusion then follows because the lower and upper bounds converge to the same limit.
\end{proof}

\begin{theorem}\label{thm:growth}
Given $\rho>1$, $\sigma\geq 0$ and $C_{\Upsilon}>0$, let $S_{\bsgamma}(m)=\sum_{v\subseteq \natu}\gamma_v m^{|v|}$ with $\gamma_\emptyset=1$ and
    $$\gamma_v=(|v|!)^{\sigma}\prod_{j\in v} \frac{C_{\Upsilon}}{j^{\rho}} \quad\text{for}\quad v\neq\emptyset. $$
    Then $S_{\bsgamma}(m)<\infty$ for all $m> 0$ if and only if  $\rho>\sigma$, in which case
    $$\liminf_{m\to\infty} m^{-\frac{1}{\rho-\sigma}}\log S_{\bsgamma}(m)\geq (\rho-\sigma)C^{\frac{1}{\rho-\sigma}}_{\Upsilon}$$
    and
    $$\limsup_{m\to\infty}m^{-\frac{1}{\rho-\sigma}}\log S_{\bsgamma}(m)\leq (\rho-\sigma)(C_{\rho}C_\Upsilon)^{\frac{1}{\rho-\sigma}}$$
    for $C_{\rho}=\exp(1)/\min(\rho-1,1)$.
\end{theorem}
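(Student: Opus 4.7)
The plan is to feed Theorem~\ref{mainthm} with $\Gamma_\ell = (\ell!)^\sigma$ and $\Upsilon_j = C_\Upsilon/j^\rho$ for the upper bound, pair it with a transparent lower bound coming from the single subset $v = \{1,\dots,\ell\}$, and convert both estimates into the stated asymptotic rates via Lemma~\ref{lem:growth} applied with $\theta = \rho - \sigma$.

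For the upper bound, I would first estimate the tail $\sum_{j=J+1}^\infty j^{-\rho}$ by an integral comparison, producing $(\rho-1)^{-1} J^{1-\rho}$. Dividing by $J$ and taking the maximum with $\Upsilon_J = C_\Upsilon J^{-\rho}$ gives the clean bound $C_\Upsilon J^{-\rho}/\min(\rho-1,1)$. Multiplying over $J = 1,\dots,\ell$ produces a $(\ell!)^{-\rho}$ factor together with a geometric $(C_\Upsilon/\min(\rho-1,1))^\ell$. Folding these into Theorem~\ref{mainthm} and using the $\Gamma_\ell = (\ell!)^\sigma$ prefactor collapses the series into
\begin{equation*}
S_{\bsgamma}(m) \leq 1 + \exp(1) \sum_{\ell=1}^\infty \frac{(C_\rho C_\Upsilon m)^\ell}{(\ell!)^{\rho-\sigma}},
\end{equation*}
where the $\exp(\ell+1)$ from Theorem~\ref{mainthm} contributes the extra factor of $\exp$ per term that yields exactly $C_\rho = \exp(1)/\min(\rho-1,1)$. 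Applying Lemma~\ref{lem:growth} with $\theta = \rho - \sigma$ then immediately delivers the claimed $\limsup$.

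For the lower bound, I would just keep the single subset $v = \{1,2,\dots,\ell\}$ in $S_{\bsgamma}(m)$:
\begin{equation*}
S_{\bsgamma}(m) \geq \sum_{\ell=0}^\infty (\ell!)^\sigma m^\ell \prod_{j=1}^\ell \frac{C_\Upsilon}{j^\rho} = \sum_{\ell=0}^\infty \frac{(C_\Upsilon m)^\ell}{(\ell!)^{\rho-\sigma}},
\end{equation*}
and invoke Lemma~\ref{lem:growth} once more to obtain the $\liminf$. This same lower bound simultaneously settles the \emph{only if} direction of the convergence dichotomy: when $\rho < \sigma$ the exponent of $\ell!$ is negative, so the series diverges for every $m > 0$; when $\rho = \sigma$ it reduces to a geometric series in $C_\Upsilon m$, which diverges as soon as $m \geq 1/C_\Upsilon$, giving a single $m > 0$ at which $S_{\bsgamma}(m) = \infty$.

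The only step that demands a bit of care is tracking the constants through the upper bound chain, since we need precisely $C_\rho = \exp(1)/\min(\rho-1,1)$ to match the stated constant; otherwise the argument is a direct application of Theorem~\ref{mainthm} and Lemma~\ref{lem:growth}, with no further analytic input required.
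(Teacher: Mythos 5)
Your proposal is correct and follows essentially the same route as the paper: the same instantiation of Theorem~\ref{mainthm} with $\Gamma_\ell=(\ell!)^\sigma$ and $\Upsilon_j=C_\Upsilon j^{-\rho}$, the same integral tail estimate yielding $C_\rho=\exp(1)/\min(\rho-1,1)$, the same single-subset lower bound $\sum_{\ell}(C_\Upsilon m)^\ell/(\ell!)^{\rho-\sigma}$, and Lemma~\ref{lem:growth} applied to both sides. The only cosmetic difference is that you split the $\rho\leq\sigma$ divergence argument into two cases where the paper handles both at once by noting the lower bound diverges for $m\geq C_\Upsilon^{-1}$.
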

\begin{proof}
   Let $\Gamma_\ell=(\ell!)^{\sigma}$ and $\Upsilon_j=C_\Upsilon j^{-\rho}$ in Theorem~\ref{mainthm}. Because
   $$\frac{1}{J}\sum_{j=J+1}^\infty \Upsilon_j\leq \frac{C_{\Upsilon}}{J} \int_{J}^\infty j^{-\rho} \rd j=\frac{C_{\Upsilon}J^{-\rho}}{\rho-1},$$
   we have the upper bound
$$S_{\bsgamma}(m)\leq 1+\sum_{\ell=1}^\infty (\ell!)^\sigma \frac{\exp(\ell+1)(C_{\Upsilon} m)^\ell}{\min(\rho-1,1)^\ell} \prod_{J=1}^\ell J^{-\rho}\leq \exp(1) \sum_{\ell=0}^\infty\frac{(C_\rho C_{\Upsilon} m)^\ell}{(\ell!)^{\rho-\sigma}}.  $$
Meanwhile, we have the lower bound
    $$S_{\bsgamma}(m)\geq 1+\sum_{\ell=1 }^\infty (\ell !)^{\sigma}m^{\ell}\prod_{j=1}^\ell \frac{C_\Upsilon}{j^{\rho}}=  \sum_{\ell=0}^\infty \frac{(C_\Upsilon m)^{\ell}}{(\ell!)^{\rho-\sigma}} .$$
When $\rho\leq \sigma$, the lower bound diverges when $m\geq C^{-1}_\Upsilon $. When $\rho>\sigma$, applying Lemma~\ref{lem:growth} to both the upper and lower bounds yields our conclusion.
\end{proof}

\begin{remark}
   In the context of tractability analysis, Theorem~\ref{thm:growth} implies that relaxing $\gamma_v=(|v|!)^{\sigma}\prod_{j\in v}C_{\Upsilon}j^{-\rho}$ to $\gamma_v\leq \prod_{j\in v}C_{\Upsilon}j^{-\rho+\sigma}$ does not change the growth of $\log S_{\bsgamma}(m)$ up to a constant factor. This relaxation is particularly useful because the latter product-form upper bounds are easier to analyze and implement numerically. For instance, a recent study \cite{pan2026uncertainty} employs this relaxation to design an importance sampling measure for QMC integration.
\end{remark}

\section{Extension to SPOD weights}\label{sec:SPOD}

In this section, we extend our main results to the broader framework of smoothness-driven product and order dependent (SPOD) weights introduced in \cite{dick2014higher}. For a fixed smoothness parameter $\alpha\in\natu$, we denote $\{1{:}\alpha\}=\{1,2,\dots,\alpha\}$. Following \cite{cools2021fast,dick2016multilevel}, we define these weights for nonempty $v\subseteq \natu$ by
\begin{equation}\label{eqn:SPODdef}
\gamma_v=\sum_{\bsnu\in \{1{:}\alpha\}^{|v|}}\Gamma_{|\bsnu|}\prod_{j\in v} \Upsilon_{j,\nu_j},  
\end{equation}
where $|\bsnu|=\sum_{j\in v}\nu_j$ for a multi-index $\bsnu=(\nu_j)_{j\in v}$, and the parameters are nonnegative sequences $\{\Gamma_\ell\mid \ell\in\natu\cup \{0\}\}$ and $\{\Upsilon_{j,k} \mid j\in\natu,k\in \{1{:}\alpha\}\}$.  This general form reduces to the standard POD weights \eqref{eqn:PODweight} when $\alpha=1$.

First, we establish the counterpart of Theorem~\ref{thm:improve}.
\begin{theorem}\label{thm:SPODimprove}
For $\alpha\in\natu$ and nonnegative $\{\Upsilon_{j,k} \mid j\in\natu,k\in \{1{:}\alpha\}\}$, 
\begin{equation}\label{eqn:SPODsum}
   \sum_{v\subseteq \natu}  m^{|v|}\sum_{\bsnu\in \{1{:}\alpha\}^{|v|}}|\bsnu|!\prod_{j\in v} \Upsilon_{j,\nu_j}<\infty \text{ for all } m>0
\end{equation}
    if $\sum_{j=1}^\infty\sum_{k=1}^\alpha \Upsilon^{1/k}_{j,k}<\infty$.
\end{theorem}
\begin{proof}
It suffices to prove the statement for $m\geq 1$. Let 
    $$\Upsilon'_{j'}=\max_{k\in \{1{:}\alpha\}}\Upsilon^{1/k}_{j,k} \quad \text{for} \quad  \alpha (j-1)+1\leq j' \leq \alpha j,$$
    and
    $$S_{\bsgamma}(m)=\sum_{v'\subseteq\natu} |v'|! m^{|v'|}\prod_{j'\in v'}\Upsilon'_{j'}.$$
    For each $(v,\bsnu)$ pair appearing in \eqref{eqn:SPODsum}, 
    \begin{equation}\label{eqn:SPODtoPOD}
   |\bsnu|!  m^{|v|}\prod_{j\in v} \Upsilon_{j,\nu_j}\leq |v'|! m^{|v'|}\prod_{j'\in v'}\Upsilon'_{j'}   
    \end{equation}
     with $v'=\{\alpha(j-1)+k\mid j\in v,k\in \{1{:}\nu_j\}\}$. Therefore, the left hand side of \eqref{eqn:SPODsum} is bounded by $S_{\bsgamma}(m)$ and its finiteness follows from Theorem~\ref{thm:improve} and 
    \begin{align*}
     \sum_{j'=1}^\infty\Upsilon'_{j'}&=\alpha \sum_{j=1}^\infty\max_{k\in \{1{:}\alpha\}}\Upsilon^{1/k}_{j,k} \leq \alpha \sum_{j=1}^\infty\sum_{k=1}^\alpha \Upsilon^{1/k}_{j,k}<\infty. \qedhere  
    \end{align*}
\end{proof}
\begin{remark}

The converse of Theorem~\ref{thm:SPODimprove} is unfortunately not true. For instance, let $\alpha=2$, $\Upsilon_{j,1}=0$, and $\Upsilon_{j,2}=j^{-2}(1+\log j)^{-2}$ for $j\in \natu$. Then $\sum_{j=1}^\infty\sum_{k=1}^\alpha \Upsilon^{1/k}_{j,k}=\sum_{j=1}^\infty j^{-1}(1+\log j)^{-1}=\infty$. However,
\begin{align*}
    \sum_{v\subseteq \natu}  m^{|v|}\sum_{\bsnu\in \{1{:}\alpha\}^{|v|}}|\bsnu|!\prod_{j\in v} \Upsilon_{j,\nu_j}
= &\sum_{v\subseteq \natu}  m^{|v|}(2|v|)!\prod_{j\in v} j^{-2}(1+\log j)^{-2}\nonumber \\
\overset{\text{(i)}}{\leq} & \sum_{v\subseteq \natu} (4m)^{|v|}|v|! \prod_{j\in v} j^{-1}(1+\log j)^{-2}\overset{\text{(ii)}}{<}\infty,
\end{align*}
where (i) is due to
$$(2|v|)!=(|v|!)^2{2|v|\choose |v|}\le (|v|!)^2 2^{2|v|}\leq 4^{|v|} |v|! \prod_{j\in v} j,$$
and (ii) follows from Theorem~\ref{thm:improve} and $\sum_{j=1}^\infty j^{-1}(1+\log j)^{-2}<\infty$.
\end{remark}

\begin{remark}
    In the special case $\Upsilon_{j,k}=\Upsilon^k_{j,1}$ for all $k\in\{1{:}\alpha\}$, the sufficient condition $\sum_{j=1}^\infty\sum_{k=1}^\alpha \Upsilon^{1/k}_{j,k}<\infty$ is equivalent to $\sum_{j=1}^\infty\Upsilon_{j,1}<\infty$, so by Theorem~\ref{thm:improve}, this condition is also necessary.
\end{remark}

The next theorem generalizes Theorem~\ref{thm:growth}.

\begin{theorem}
Given $\alpha\in\natu$, $\rho>1$, $\sigma\geq 0$ and positive $\{C_{\Upsilon,k}\mid k\in \{1{:}\alpha\}\}$, let $S_{\bsgamma}(m)=\sum_{v\subseteq \natu}\gamma_v m^{|v|}$ with $\gamma_\emptyset=1$ and
    $$\gamma_v=\sum_{\bsnu\in \{1{:}\alpha\}^{|v|}}(|\bsnu|!)^{\sigma}\prod_{j\in v} \frac{C_{\Upsilon,\nu_j}}{j^{\nu_j\rho}} \quad\text{for}\quad v\neq \emptyset. $$
    Then $S_{\bsgamma}(m)<\infty$ for all $m> 0$ if and only if  $\rho>\sigma$, in which case
    \begin{equation}\label{eqn:liminf}
    \liminf_{m\to\infty} m^{-\frac{1}{\rho-\sigma}}\log S_{\bsgamma}(m)\geq (\rho-\sigma)C^{\frac{1}{\rho-\sigma}}_{\Upsilon,1}    
    \end{equation}
    and
    \begin{equation}\label{eqn:limsup}
     \limsup_{m\to\infty}m^{-\frac{1}{\rho-\sigma}}\log S_{\bsgamma}(m)\leq (\rho-\sigma)\left(C_{\alpha,\rho}\max_{k\in \{1{:}\alpha\}}C^{1/k}_{\Upsilon,k} \right)^{\frac{1}{\rho-\sigma}}   
    \end{equation}
    for a finite constant $C_{\alpha,\rho}$ depending on $\alpha$ and $\rho$.
\end{theorem}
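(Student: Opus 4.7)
The plan is to parallel the structure of Theorem~\ref{thm:growth}, handling the lower and upper bounds by two separate reductions, both of which route the SPOD problem back to the POD statement already proved. For the lower bound \eqref{eqn:liminf}, I would retain only the term $\bsnu=(1,1,\dots,1)$ inside the defining sum of $\gamma_v$, which gives $\gamma_v\geq (|v|!)^{\sigma}\prod_{j\in v}C_{\Upsilon,1}/j^\rho$ and hence
\[
S_{\bsgamma}(m)\geq \sum_{v\subseteq\natu}(|v|!)^\sigma m^{|v|}\prod_{j\in v}\frac{C_{\Upsilon,1}}{j^\rho}.
\]
Theorem~\ref{thm:growth}, applied with $C_\Upsilon$ replaced by $C_{\Upsilon,1}$, then delivers \eqref{eqn:liminf} directly. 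The very same lower bound also diverges once $\rho\leq\sigma$ and $m$ is sufficiently large, which gives the ``only if'' direction of the convergence dichotomy.

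For the upper bound \eqref{eqn:limsup}, because the conclusion concerns $m\to\infty$, I can restrict to $m\geq 1$. I would then reuse the SPOD-to-POD reduction from the preceding theorem: set $\Upsilon'_{j'}=\max_{k\in\{1{:}\alpha\}}\Upsilon_{j,k}^{1/k}$ for $\alpha(j-1)+1\leq j'\leq \alpha j$. Since $|v'|=|\bsnu|$ exactly under the association $v'=\{\alpha(j-1)+k\mid j\in v,\,k\in\{1{:}\nu_j\}\}$, inequality \eqref{eqn:SPODtoPOD} lifts verbatim to
\[
(|\bsnu|!)^\sigma m^{|v|}\prod_{j\in v}\Upsilon_{j,\nu_j}\leq (|v'|!)^\sigma m^{|v'|}\prod_{j'\in v'}\Upsilon'_{j'}.
\]
The map $(v,\bsnu)\mapsto v'$ is injective because $v'$ consists of ``prefixes'' of chunks $\{\alpha(j-1)+1,\dots,\alpha j\}$, from which $v$ and each $\nu_j$ can be read off. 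Summing over $(v,\bsnu)$ on the left and over all $v'\subseteq\natu$ on the right therefore upper bounds $S_{\bsgamma}(m)$ by a standard POD sum of the type addressed in Theorem~\ref{thm:growth}.

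To identify constants, I would use $\Upsilon_{j,k}^{1/k}=C_{\Upsilon,k}^{1/k}j^{-\rho}$ together with the crude estimate $\lceil j'/\alpha\rceil\geq j'/\alpha$ to get $\Upsilon'_{j'}\leq \alpha^\rho \bigl(\max_{k} C_{\Upsilon,k}^{1/k}\bigr)(j')^{-\rho}$. Feeding this into Theorem~\ref{thm:growth} with the substitution $C_\Upsilon\mapsto\alpha^\rho\max_k C_{\Upsilon,k}^{1/k}$ then yields \eqref{eqn:limsup} with the explicit constant $C_{\alpha,\rho}=C_\rho\alpha^\rho=\exp(1)\alpha^\rho/\min(\rho-1,1)$. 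The only nontrivial point I expect to verify carefully is that raising the factorial to the power $\sigma$ does not spoil the SPOD-to-POD inequality: because $|v'|=|\bsnu|$, the factor $(|\bsnu|!)^\sigma$ is preserved exactly rather than merely bounded, so no additional loss is incurred beyond the multiplicative constant $\alpha^\rho$ coming from the comparison $\lceil j'/\alpha\rceil\geq j'/\alpha$. The rest is routine bookkeeping already packaged inside Theorem~\ref{thm:growth} and Lemma~\ref{lem:growth}.
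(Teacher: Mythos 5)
Your proof is correct. The lower bound and the ``only if'' direction are exactly the paper's argument: restrict to $\bsnu=(1,\dots,1)$ and invoke Theorem~\ref{thm:growth} with $C_\Upsilon=C_{\Upsilon,1}$. For the upper bound you use the same block construction as the paper ($\Upsilon'_{j'}=\max_k\Upsilon_{j,k}^{1/k}$ on the block $\alpha(j-1)+1\le j'\le\alpha j$, and the injective map $(v,\bsnu)\mapsto v'$), but you organize the sum differently: by using $m\ge 1$ to replace $m^{|v|}$ with $m^{|v'|}=m^{|\bsnu|}$, you reduce the entire SPOD sum in one step to a POD sum with weights bounded by $\alpha^\rho\max_k C_{\Upsilon,k}^{1/k}\,(j')^{-\rho}$, and then quote Theorem~\ref{thm:growth} wholesale (using monotonicity of the sum in the weights). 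The paper instead keeps the exponent $|v|=\ell$ on $m$ and splits the sum over pairs $(\ell,\ell')$ with $\ell'=|\bsnu|\ge\ell$, applying Lemma~\ref{lem:factorial} at fixed $\ell'$; this forces an extra tail estimate (the $\ell_*$ truncation and the polynomially growing term $S_{\bsgamma}(m,\ell_*)$) to collapse the inner sum over $\ell'$ back to a single power series in $m$. Your route is shorter and yields an explicit constant $C_{\alpha,\rho}=\exp(1)\alpha^\rho/\min(\rho-1,1)$ of the required form; the paper's bookkeeping buys a power series indexed by $|v|$ rather than $|\bsnu|$, which is a slightly sharper intermediate bound but makes no difference to the stated asymptotic rate. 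One small point worth making explicit in a write-up: finiteness of $S_{\bsgamma}(m)$ for $0<m<1$ follows from termwise monotonicity in $m$, so the restriction to $m\ge 1$ is harmless for the full ``if and only if'' claim as well.
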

\begin{proof}
Because 
$$S_{\bsgamma}(m)\geq \sum_{v\subseteq \natu} (|v|!)^{\sigma} m^{|v|} \prod_{j\in v} \frac{C_{\Upsilon,1}}{j^{\rho}},$$
Theorem~\ref{thm:growth} shows $S_{\bsgamma}(m)$ diverges for sufficiently large $m$ if $\rho\leq \sigma$ and \eqref{eqn:liminf} holds if $\rho>\sigma$.

Now we assume $\rho>\sigma$ and establish an upper bound on $S_{\bsgamma}(m)$. Let 
    $$\Sigma_{\ell,\ell'}=\left\{(v,\bsnu)\mid v\subseteq \natu, |v|=\ell,\bsnu\in \{1{:}\alpha\}^{|v|},|\bsnu|=\ell'\right\}.$$
    Then we can rewrite $S_{\bsgamma}(m)$ as
    \begin{equation}\label{eqn:SPODSm}
    S_{\bsgamma}(m)=1+\sum_{\ell=1}^\infty m^{\ell}\sum_{\ell'=\ell}^{\alpha \ell}    \left(\ell'!\right)^{\sigma}\sum_{(v,\bsnu)\in \Sigma_{\ell,\ell'}} \prod_{j\in v} \frac{C_{\Upsilon,\nu_j}}{j^{\nu_j\rho}}.    
    \end{equation}
    Denote $C_{\max}=\max_{k\in \{1{:}\alpha\}}C^{1/k}_{\Upsilon,k} $ and $\Upsilon'_{j'}=C_{\max} (\lceil j'/\alpha\rceil)^{-\rho} $ for $j'\in\natu$.
    By an argument similar to \eqref{eqn:SPODtoPOD}, Lemma~\ref{lem:factorial} implies
    \begin{align*}
        \sum_{(v,\bsnu)\in \Sigma_{\ell,\ell'}} \prod_{j\in v} \frac{C_{\Upsilon,\nu_j}}{j^{\nu_j\rho}} \leq \sum_{v'\in \natu_{\ell'}}\prod_{j'\in v'} \Upsilon'_{j'}\leq \exp(\ell'+1)\prod_{J=1}^{\ell'} \max\left(\Upsilon'_J,\frac{1}{J}\sum_{j'=J+1}^\infty \Upsilon'_{j'}\right).
    \end{align*} 
    Because $\Upsilon'_{J}\leq C_{\max}(J/\alpha)^{-\rho}$ and 
    $$\frac{1}{J}\sum_{j'=J+1}^\infty \Upsilon'_{j'}\leq \frac{\alpha C_{\max}}{J}\sum_{j=\lfloor J/\alpha\rfloor+1}j^{-\rho}\leq C'_{\alpha,\rho} C_{\max} (J/\alpha)^{-\rho} $$
    for a constant $C'_{\alpha,\rho}\geq 1$ depending on $\alpha$ and $\rho$,
    we can further bound
    \begin{align*}
       \sum_{\ell'=\ell}^{\alpha \ell}    \left(\ell'!\right)^{\sigma}\sum_{(v,\bsnu)\in \Sigma_{\ell,\ell'}} \prod_{j\in v} \frac{C_{\Upsilon,\nu_j}}{j^{\nu_j\rho}}
       \leq &  \sum_{\ell'=\ell}^{\alpha\ell}    \left(\ell'!\right)^{\sigma} \exp(\ell'+1)(\alpha^\rho C'_{\alpha,\rho}C_{\max})^{\ell'} \prod_{J=1}^{\ell'} J^{-\rho}\\
       = &\exp(1) \sum_{\ell'=\ell}^{\alpha\ell} \frac{\left(C_{\alpha,\rho}C_{\max}\right)^{\ell'} }{(\ell'!)^{\rho-\sigma}},
    \end{align*}
    where  $C_{\alpha,\rho}=\exp(1) \alpha^\rho C'_{\alpha,\rho}$. Let $\ell_*= \lceil (2C_{\alpha,\rho}C_{\max})^{1/(\rho-\sigma)}\rceil$. Then $\ell>\ell_*$ implies
     $(\ell')^{\rho-\sigma}\geq 2C_{\alpha,\rho}C_{\max}$ for $\ell'\geq\ell$ and
    $$\sum_{\ell'=\ell}^{\alpha\ell} \frac{\left( C_{\alpha,\rho}C_{\max}\right)^{\ell'} }{(\ell'!)^{\rho-\sigma}}\leq \frac{\left(C_{\alpha,\rho}C_{\max}\right)^{\ell} }{(\ell!)^{\rho-\sigma}}\sum_{\ell'=0}^\infty 2^{-\ell'}
    = \frac{2\left(C_{\alpha,\rho}C_{\max}\right)^{\ell} }{(\ell!)^{\rho-\sigma}}.$$
    Plugging the above bounds into \eqref{eqn:SPODSm} yields
    \begin{equation}\label{eqn:SmSml}
     S_{\bsgamma}(m)\leq S_{\bsgamma}(m,\ell_*)
    +2\exp(1)\sum_{\ell=\ell_*+1}^\infty  \frac{\left(C_{\alpha,\rho}C_{\max}m\right)^{\ell} }{(\ell!)^{\rho-\sigma}},    
    \end{equation}
    where
    $$S_{\bsgamma}(m,\ell_*)=1+\exp(1)\sum_{\ell=1}^{\ell_*} m^{\ell}\sum_{\ell'=\ell}^{\alpha\ell} \frac{\left(C_{\alpha,\rho}C_{\max}\right)^{\ell'} }{(\ell'!)^{\rho-\sigma}}.$$
    Because $S_{\bsgamma}(m,\ell_*)$ grows polynomially in $m$, \eqref{eqn:limsup} follows after we apply Lemma~\ref{lem:growth} to the second term on the right hand side of \eqref{eqn:SmSml}.
\end{proof}

\section{Application to QMC integration}\label{sec:QMC}

We conclude this paper by applying our theory to strengthen Theorem 3.1 of \cite{dick2014higher}, which addresses QMC integration for integrands in weighted Sobolev spaces equipped with SPOD weights. The quadrature rule  employed therein is the CBC-constructed interlaced polynomial lattice rule of order $\alpha$, with $\alpha\geq 2$ an integer. Its points are constructed adaptively with respect to the weights to minimize the worst-case integration error over the unit ball in the weighted Sobolev space. Since a full exposition is beyond our present scope, we refer the reader to \cite{dick2014higher,dick:pill:2010} for the error analysis and implementation details.

We first set up the notation required for the theorem. For $\alpha\in\natu$ and a positive sequence $\bsbeta=\{\beta_j\mid j\in \natu\}$, we introduce the SPOD weights
\begin{equation}\label{eqn:betaSPOD}
    \gamma_{v,\alpha,\bsbeta}=\sum_{\bsnu\in \{1{:}\alpha\}^{|v|}}\gamma_{\bsnu,\bsbeta}\quad\text{for}\quad \gamma_{\bsnu,\bsbeta}=|\bsnu|!\prod_{j\in v} \beta_j^{\nu_j}.
\end{equation}
Next, for $\alpha\in \natu$ and a smooth integrand $F:[0,1]^s\to \real$, we use $\partial^{\bsnu} F$ with $\bsnu=(\nu_j)_{j\in v}\in \{1{:}\alpha\}^{|v|}$ to denote the partial derivative
$$\partial^{\bsnu} F(\bsx)=\left(\prod_{j\in v} \frac{\partial^{\nu_j}}{\partial x_j^{\nu_j}}\right)F(\bsx).$$
We further denote its supremum norm by
$$\Vert\partial^{\bsnu} F\Vert_{\infty}=\sup_{\bsx\in [0,1]^s} |\partial^{\bsnu} F(\bsx)|.$$
If $\Vert\partial^{\bsnu} F\Vert_{\infty}<\infty$ for all $\bsnu\in \{1{:}\alpha\}^{|v|}$ and all $v\subseteq \{1{:}s\}$, we define the weighted norm of $F$ as
$$\Vert F\Vert_{s,\alpha,\bsbeta}=\sup_{v\subseteq \{1{:}s\}} \sup_{\bsnu\in \{1{:}\alpha\}^{|v|}}\gamma_{\bsnu,\bsbeta}^{-1}\Vert\partial^{\bsnu} F\Vert_{\infty},$$
where $\gamma_{\bsnu,\bsbeta}$ is defined as in \eqref{eqn:betaSPOD} when $v\neq\emptyset$, and $\gamma_{\bsnu,\bsbeta}=1$ when $v=\emptyset$.

The following theorem improves \cite[Theorem 3.1]{dick2014higher} by removing an extra assumption that was required there. Specifically, we prove that the integration error achieves the same dimension-independent convergence rate without the condition that $\sum_{j=1}^\infty\beta_j$ be less than a finite threshold, which had been imposed only in the case where $\sum_{j=1}^\infty\beta^p_j$ diverges for all $p\in (0,1]$ except for $p=1$. We note that this condition is imposed in other QMC works (see, e.g., \cite{dick2016higher,graham:2015,kuo2012quasi,kuo2015multi}), and our analysis below generalizes to these settings as well.

\begin{theorem}
    Let $\bsbeta=\{\beta_j\mid j\in \natu\}$ be a positive sequence with $\sum_{j=1}^\infty\beta^p_j<\infty$ for some $p\in (0,1]$. Define $\alpha=\lfloor 1/p\rfloor+1$. For $s\in\natu$ and $N$ an integer power of a prime number $b$, let $\{\bsx_0,\bsx_1,\dots,\bsx_{N-1}\}\subseteq [0,1]^s$ be the quadrature points of the interlaced polynomial lattice rule of order $\alpha$, constructed with respect to the SPOD weights \eqref{eqn:betaSPOD} (see \cite[Section 3]{dick2014higher} for a full description).

    Then for any integrand $F:[0,1]^s\to \real$ with $\Vert F\Vert_{s,\alpha,\bsbeta}\leq 1$, there exists a constant $C_{\alpha,\bsbeta,b,p}<\infty$ independent of $s$ such that 
    $$\left|\frac{1}{N}\sum_{n=0}^{N-1} F(\bsx_n)-\int_{[0,1]^s}F(\bsx)\rd \bsx\right|\leq C_{\alpha,\bsbeta,b,p} N^{-1/p}.$$
\end{theorem}

\begin{proof}
    The case $p\in (0,1)$ is covered by the original theorem \cite[Theorem 3.1]{dick2014higher}, so we consider only the case $p=1$. Our proof relies on \cite[Theorem 3.10]{dick2014higher}, which states that for $\lambda\in (1/\alpha,1]$ and $F$ satisfying $\Vert F\Vert_{s,\alpha,\bsgamma,q,\infty}\leq 1$, 
    $$\left|\frac{1}{N}\sum_{n=0}^{N-1} F(\bsx_n)-\int_{[0,1]^s}F(\bsx)\rd \bsx\right|\leq \left(\frac{2}{N-1}\sum_{\emptyset\neq v\subseteq \{1{:}s\}}\gamma^{\lambda}_v \left(\rho_{\alpha,b}(\lambda)\right)^{|v|}\right)^{1/\lambda}.$$
    Here, $\bsgamma=\{\gamma_v\mid v\subseteq \natu\}$ are the weights used in the construction of the interlaced polynomial lattice rule, $\rho_{\alpha,b}(\lambda)$ is a constant depending on $\alpha,b,\lambda$ (see \cite[equation (3.37)]{dick2014higher}), and $\Vert F\Vert_{s,\alpha,\bsgamma,q,\infty}$ with $q\in [1,\infty]$ is a weighted Sobolev norm given by \cite[equation (3.7)]{dick2014higher}. While we omit the exact form of $\Vert F\Vert_{s,\alpha,\bsgamma,q,\infty}$ here, it was shown in \cite[Section 3.1]{dick2014higher} that, if $c>0$ satisfies
    \begin{equation*}
     \Vert\partial^{\bsnu} F\Vert_{\infty}\leq c \gamma_{\bsnu,\bsbeta} \text{ for all }\bsnu\in \{1{:}\alpha\}^{|v|} \text{ and all } v\subseteq \{1{:}s\},   
    \end{equation*}
    then $\Vert F\Vert_{s,\alpha,\bsgamma,q,\infty}\leq c$ for all $q\in [1,\infty]$. Since $c=\Vert F\Vert_{s,\alpha,\bsbeta}$ fulfills this condition, we have  $\Vert F\Vert_{s,\alpha,\bsgamma,q,\infty}\leq \Vert F\Vert_{s,\alpha,\bsbeta}$.
    
    In our setting, $\gamma_v=\gamma_{v,\alpha,\bsbeta}$ and $\Vert F\Vert_{s,\alpha,\bsgamma,q,\infty}\leq \Vert F\Vert_{s,\alpha,\bsbeta}\leq 1$. Thus, we can apply \cite[Theorem 3.10]{dick2014higher} with $\lambda=1$ and derive
     $$\left|\frac{1}{N}\sum_{n=0}^{N-1} F(\bsx_n)-\int_{[0,1]^s}F(\bsx)\rd \bsx\right|\leq \frac{2}{N-1}\sum_{\emptyset\neq v\subseteq \{1{:}s\}}\gamma_{v,\alpha,\bsbeta} \left(\rho_{\alpha,b}(1)\right)^{|v|}.$$
Our conclusion then follows directly from Theorem~\ref{thm:SPODimprove} with $m=\rho_{\alpha,b}(1)$ and $\Upsilon_{j,k}=\beta_j^{k}, j\in \natu,k\in \{1{:}\alpha\}$, since 
$\sum_{j=1}^\infty\sum_{k=1}^\alpha \Upsilon^{1/k}_{j,k}=\alpha \sum_{j=1}^\infty\beta_j<\infty$.
    
\end{proof}

\section*{Acknowledgments}
I thank Josef Dick and Takashi Goda for valuable discussions. 

\bibliographystyle{abbrv} 
\bibliography{qmc}

\end{document}